\documentclass[a4paper,11pt]{article}
\usepackage[top=2.5cm,bottom=2.5cm,left=2.2cm,right=2.2cm]{geometry}
\usepackage{amsfonts}
\usepackage{mathrsfs,amscd,amssymb,amsthm,amsmath,bm,graphicx,psfrag,subfigure,url,mathtools}
\usepackage{pict2e}
\usepackage{stfloats}
\usepackage{psfrag,amsmath}
\usepackage{tikz}
\usepackage{indentfirst}
\usepackage[
    colorlinks=true,
    linkcolor=blue,
    citecolor=black,
    urlcolor=blue
]{hyperref}
\usepackage{bookmark}
\usepackage{enumerate}
\usepackage{latexsym,euscript,epic,eepic,color}
\usepackage{multirow}
\usepackage{multicol}
\usepackage{longtable}
\usepackage{adjustbox}
\usepackage[all]{xy}
\usepackage{setspace}
\usepackage{booktabs}
\allowdisplaybreaks
\usepackage{authblk}
\usepackage{cite}

\makeatletter

\renewcommand{\@seccntformat}[1]{{\csname the#1\endcsname}{\normalsize .}\hspace{.5em}}
\makeatother

\usepackage{ifpdf}
\usepackage{indentfirst}

\newcommand{\cP}{\mathcal{P}}
\newcommand{\binomset}[2]{\binom{#1}{#2}}
\newtheorem{thm}{Theorem}[section]
\newtheorem{theorem}[thm]{Theorem}

\newtheorem{lemma}[thm]{Lemma} 

\newtheorem{conj}{Conjecture}

\begin{document}
\baselineskip=0.23in
\title{\bf $k$-Connected Subgraphs of All Orders in Large Graphs with Minimum Degree at Least $n/q$}
\author{Heng Yang}
\affil{Department of Mathematics, East China Normal University, Shanghai, 200241, China}
\date{\today}
\maketitle

\begin{abstract}
For every fixed pair of integers $k\ge 2$ and $q\ge 3$, we prove that
every sufficiently large $k$-connected graph $G$ of order $n$ with
minimum degree $\delta(G)\ge n/q$ contains a $k$-connected subgraph of
every order $\ell\in\{2k,2k+1,\ldots,n\}$. In the case $k=2$, this confirms a
conjecture of Liu and Ning~\cite{LiuNing}. The proof combines two constructions. First, we construct a small $k$-connected subgraph $D$ such that every
vertex outside $D$ has at least $k$ neighbors in $D$. By successively adding the vertices outside $D$, we obtain
$k$-connected subgraphs of every order from $|V(D)|$ to $n$. Second, an averaging argument on common
neighborhoods, together with a complete bipartite construction, yields
$k$-connected subgraphs of every order from $2k$ to $|V(D)|$. Together,
the two constructions cover all orders from $2k$ to $n$. The lower
endpoint $2k$ is best possible.

\vskip 0.2cm
\noindent {\bf Keywords:}
Minimum degree; $k$-connected graphs; interpolation property

\vskip 0.2cm
\noindent {\bf AMS Subject Classification:}
05C40; 05C07
\end{abstract}

\section{\normalsize Introduction}

Throughout this paper, all graphs are finite, simple, and undirected. For
notation and terminology not defined here, we refer the reader to
West~\cite{West}.

Let $\mathcal{F}$ be a family of subgraphs of a graph, and let $f$ be an
integer-valued graph invariant. We say that $f$ has the
\emph{interpolation property} over $\mathcal{F}$ if, whenever
$H_1,H_2\in\mathcal{F}$ satisfy
\[
    f(H_1)=a<b=f(H_2),
\]
then, for every integer $c$ with $a<c<b$, there exists
$H\in\mathcal{F}$ such that $f(H)=c$.

The study of interpolation properties in graph theory has its origins in
questions concerning spanning trees and their numbers of leaves. Schuster
proved the following classical result.

\begin{theorem}[Schuster~\cite{Schuster}]
\label{thm:Schuster}
Let $G$ be a connected graph. If $G$ contains spanning trees with exactly
$a$ and $b$ leaves, where $a<b$, then, for every integer $c$ with
$a<c<b$, $G$ contains a spanning tree with exactly $c$ leaves.
\end{theorem}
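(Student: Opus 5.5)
The plan is an exchange argument on spanning trees: I will show that a single edge swap changes the number of leaves by at most one, and then connect a tree with $a$ leaves to a tree with $b$ leaves by a sequence of such swaps. For a spanning tree $T$ of $G$, write $L(T)$ for its number of leaves. Given spanning trees $T_a$ and $T_b$ with $L(T_a)=a$ and $L(T_b)=b$, I will build a sequence $T_a=T_0,T_1,\dots,T_m=T_b$ of spanning trees. Each $T_{i+1}$ arises from $T_i$ by one elementary exchange: add an edge $e\in E(T_b)\setminus E(T_i)$, which creates a unique cycle $C$, and delete an edge $f\in E(C)\setminus E(T_b)$. Such an $f$ exists because $T_b$ is acyclic, so $C$ cannot lie entirely inside $E(T_b)$. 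Each exchange raises $|E(T_i)\cap E(T_b)|$ by one, so the process ends at $T_b$ after $m=|E(T_a)\setminus E(T_b)|$ steps. This is the standard basis-exchange connectivity of the tree graph.

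The key local claim is that $|L(T_{i+1})-L(T_i)|\le 1$, and this is the step I expect to require care. Let $e=uv$ and $f=xy$. Adding $e$ raises the degrees of $u$ and $v$ by one, and deleting $f$ lowers the degrees of $x$ and $y$ by one; every other vertex keeps its degree. A vertex that is a leaf in exactly one of the two trees must be one of $u,v,x,y$.

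\emph{Case of coinciding endpoints.} If $e$ and $f$ share an endpoint, that vertex's degree is unchanged. At most two vertices change degree, one up by one and one down by one, so the leaf count changes by at most one.

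\emph{Case of four distinct endpoints.} The vertices $u$ and $v$ can only stop being leaves, and $x$ and $y$ can only become leaves. A naive count therefore allows a change of up to two, and this needs a sharper argument.
\begin{itemize}
\item Both $x$ and $y$ become leaves only if each had degree $2$ in $T_i$. Both $u$ and $v$ stop being leaves only if each was a leaf in $T_i$.
\item But $u$ and $v$ lie on the cycle $C$, so each has degree at least $2$ in $T_i+e$. Since $T_i$-degree equals $(T_i+e)$-degree minus one, each has degree at least $1$ in $T_i$. They are leaves of $T_i$ exactly when their degree in $T_i+e$ is $2$.
\item Hence the net change is $(\#\{x,y\}\text{ becoming leaves})-(\#\{u,v\}\text{ ceasing to be leaves})$, and each term lies in $\{0,1,2\}$.
\end{itemize}
A change of $+2$ or $-2$ is not immediately excluded, so if it occurs I will refine the step. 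The remedy I would try first is to replace one exchange by two. Choose $f$ adjacent on $C$ to $u$ or $v$ whenever possible; otherwise split the move through an intermediate tree obtained by adding $e$ and deleting the edge of $C$ at $u$, then correcting. This brings us back to the shared-endpoint case, in which the change is at most one.

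With every step changing $L$ by at most one, the integer sequence $L(T_0)=a,\dots,L(T_m)=b$ moves in unit steps. By the discrete intermediate value principle it attains every integer $c$ with $a<c<b$, which proves Theorem~\ref{thm:Schuster}. The main obstacle is guaranteeing the unit-step property in the four-distinct-endpoints case. If the rerouting above fails, I would instead argue directly: among all spanning trees with leaf count at most $c$ that are reachable from $T_a$, pick one maximizing the overlap with $T_b$. I would then show that some single exchange with a shared endpoint moves it toward $T_b$ without skipping over $c$.
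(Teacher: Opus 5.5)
The paper gives no proof of this statement (it is quoted from Schuster), so your argument has to stand on its own. The skeleton is fine. Exchanges $T_i\mapsto T_i+e-f$ with $e\in E(T_b)$ and $f\notin E(T_b)$ lead from $T_a$ to $T_b$. The shared-endpoint case changes $L$ by at most one, and the discrete intermediate value principle finishes the argument. You also correctly suspect that four distinct endpoints can produce a jump of $2$.

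\textbf{The gap.} The unit-step property, which is the whole content of the theorem, is never established, and your proposed repair does not work. After passing through $T_i+e-g$, where $g$ is the edge of $C$ at $u$, the ``correcting'' move is the exchange $+g-f$. The edges $g$ and $f$ share an endpoint only when they are consecutive on $C$.

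For example, let $T_a$ be the path $a\,u\,w\,x\,y\,z\,v\,b$ with a pendant vertex $p$ attached at $w$ and a pendant vertex $p'$ at $z$. Let $G=T_a+uv$ and $T_b=T_a+uv-xy$. Then:
\begin{itemize}
\item $L(T_a)=4$ and $L(T_b)=6$;
\item your procedure is forced to take $e=uv$ and $f=xy$, and $f$ is not adjacent to $u$ or $v$;
\item both $T_a+uv-uw$ and $T_a+uv-vz$ have $4$ leaves, so the correcting move again jumps from $4$ to $6$ and skips $c=5$.
\end{itemize}
Your fallback is only a plan, and its key claim fails here too. $T_a$ is a reachable tree with at most $5$ leaves and maximum overlap with $T_b$. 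Yet the only exchange from $T_a$ that increases that overlap is $+uv-xy$, which has no shared endpoint.

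\textbf{The missing idea.} Slide the deleted edge along $C$ one edge at a time rather than in one jump. Write $C-e$ as the path $u=w_0,w_1,\dots,w_s=v$ and let $f=w_rw_{r+1}$. Each graph $T_i+e-w_jw_{j+1}$ is a spanning tree, since $w_jw_{j+1}$ lies on the unique cycle of $T_i+e$. In the sequence $T_i,\ T_i+e-w_0w_1,\ T_i+e-w_1w_2,\ \dots,\ T_i+e-w_rw_{r+1}=T_{i+1}$, each step exchanges two consecutive edges of $C$ (first $e$ and $w_0w_1$, then $w_jw_{j+1}$ and $w_{j+1}w_{j+2}$), and these share an endpoint. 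By your shared-endpoint case, each step changes $L$ by at most one.

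Insert these chains between consecutive $T_i$. The overlap with $T_b$ only needs to increase at the end of each chain, not at every step. This yields a unit-step sequence from $a$ to $b$, and the rest of your argument goes through. In the example, $T_a+uv-wx$ has exactly $5$ leaves.
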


Further interpolation properties for various graph invariants and families
of subgraphs have been extensively studied in
\cite{FisherEtAl,HararyHedetniemiPrins,HararyPlantholt,LihLinTong,
ToppVestergaard}.

Another well-known interpolation problem concerns cycle lengths. A graph
$G$ of order $n$ is said to be \emph{pancyclic} if it contains a cycle
$C_\ell$ for every integer $\ell$ with $3\le \ell\le n$. Bondy established the
following fundamental result on pancyclic graphs.

\begin{theorem}[Bondy~\cite{Bondy}]
\label{thm:Bondy}
Let $G$ be a Hamiltonian graph of order $n$ and size $m$. If
\[
    m\ge \frac{n^2}{4},
\]
then $G$ is pancyclic, unless $n$ is even and
$G\cong K_{n/2,n/2}$.
\end{theorem}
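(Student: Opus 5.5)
The plan is a counting argument along a fixed Hamiltonian cycle, following Bondy's original approach. Let $C=v_1v_2\cdots v_nv_1$ be a Hamiltonian cycle of $G$, with indices taken modulo $n$. Fix $\ell$ with $3\le \ell\le n-1$ and suppose $G$ has no cycle of length $\ell$. The goal is to show $m\le n^2/4$, with equality only when $n$ is even and $G\cong K_{n/2,n/2}$. This suffices: if $m>n^2/4$, or $m=n^2/4$ and $G\not\cong K_{n/2,n/2}$, then every length in $\{3,\dots,n-1\}$ occurs, and $C$ itself gives length $n$.

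\textbf{Step 1 (crossing chords).} Suppose $v_iv_j\in E(G)$ and $v_{i+1}v_k\in E(G)$, where $j,k$ are read in cyclic order after $i+1$ with $k$ beyond $j$. Then
\[
v_i\,v_j\,v_{j-1}\cdots v_{i+1}\,v_k\,v_{k+1}\cdots v_i
\]
is a cycle of length $n-(k-j)+1$. Taking $k=j+(n-\ell+1)$ gives length $\ell$. Hence, for every $i$, the map $v_j\mapsto v_{j+n-\ell+1}$ sends $N(v_i)$ (restricted to the appropriate arc) into the non-neighbourhood of $v_{i+1}$.

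\textbf{Step 2 (degree sums).} Step 1 should give $d(v_i)+d(v_{i+1})\le n+c$, where $c$ is a small bounded correction. The correction accounts for the neighbours of $v_i$ near the ends of the arc, whose shift wraps past $v_i$, and for the edges of $C$ itself. To remove it, I would pair $v_i$ with $v_{i+1}$ and also with $v_{i+\ell-1}$. The symmetric chord configuration $v_iv_j$, $v_{i+\ell-1}v_{j'}$ forbids another shift, and combining the two forbidden shifts should give the clean bound
\[
d(v_i)+d(v_{i+1})\le n \quad\text{or}\quad d(v_i)+d(v_{i+\ell-1})\le n \quad\text{for all } i.
\]
Summing the resulting inequalities over all $i$, with each vertex counted twice, gives $4m\le n^2$.

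\textbf{Step 3 (equality).} If $m=n^2/4$, every inequality used in Step 2 is tight. Tightness should force $N(v_{i+1})$ to be exactly the complement of the shifted copy of $N(v_i)$, and this structure should propagate around $C$. The expected conclusion is that $N(v_i)=\{v_{i\pm1},v_{i\pm3},\dots\}$, so that $G$ is bipartite with parts given by the parity classes on $C$. Then $n$ is even, and $m=n^2/4$ forces $G\cong K_{n/2,n/2}$, whose missing cycle lengths are exactly the odd ones, consistent with the statement.

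The main obstacle is the boundary bookkeeping in Step 2 together with the equality analysis in Step 3. The naive inequality has an additive error that depends on $\ell$, and showing it vanishes needs a careful case split. I expect the cases to be $\ell$ near $n$ (where $n-\ell+1$ is tiny) versus small $\ell$, and possibly $\ell$ odd versus even. I would first handle $\ell=n-1$, where the shift is by $2$ and the argument is cleanest. I would then try to reduce a general $\ell$ to an averaging over the pairs $(v_i,v_{i+s})$ with $s=n-\ell+1$ and use $\gcd(s,n)$-orbits of the shift.
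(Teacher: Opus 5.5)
The paper does not prove this theorem; it only quotes it from Bondy. The natural comparison is therefore with the standard counting proof of Bondy's theorem. Your reduction and Step~1 are correct, but Step~2 has a genuine gap.

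In Step~2 you treat the neighbours $v_j$ of $v_i$ on the arc $v_{i+\ell}\cdots v_{i-1}$, whose shifts $v_{j+n-\ell+1}$ wrap past $v_i$, as a small boundary error. It is not small: there can be up to $n-\ell$ such neighbours. The proposed repair via $v_{i+\ell-1}$ also fails, for two reasons. First, you never exhibit a cycle formed by chords at $v_i$ and $v_{i+\ell-1}$. Second, even granting ``$d(v_i)+d(v_{i+1})\le n$ or $d(v_i)+d(v_{i+\ell-1})\le n$'' for each $i$, summing the chosen inequalities gives $2m+\sum_i d(v_{\sigma(i)})\le n^2$ with $\sigma(i)\in\{i+1,i+\ell-1\}$. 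Since $\sigma$ need not be a bijection, the second sum need not equal $2m$, so $4m\le n^2$ does not follow.

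The missing idea is a second, non-crossing configuration that uses the cycle edge $v_iv_{i+1}$ itself. If $v_iv_j\in E(G)$ with $v_j$ on the arc $v_{i+\ell}\cdots v_{i-1}$, then $v_{i+1}v_{j-\ell+3}\notin E(G)$; otherwise $v_iv_{i+1}v_{j-\ell+3}v_{j-\ell+4}\cdots v_jv_i$ is an $\ell$-cycle. Your Step~1 covers exactly the complementary arc $v_{i+1}\cdots v_{i+\ell-2}$, including the degenerate case $j=i+1$. The one remaining vertex $v_{i+\ell-1}$ is not a neighbour of $v_i$, since $v_iv_{i+1}\cdots v_{i+\ell-1}v_i$ would be an $\ell$-cycle.

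Together the two rules send $N(v_i)$ into $\{v_{i+3},\dots,v_{i-1}\}\setminus N(v_{i+1})$. The map is injective except for one collision: $v_{i+1}$ and $v_{i-1}$ both go to $v_{i+2-\ell}$. Hence $d(v_{i+1})\le(n-1)-(d(v_i)-1)$, that is, $d(v_i)+d(v_{i+1})\le n$ for \emph{every} $i$ and every $\ell$. No case split on $\ell$ and no $\gcd$-orbit argument is needed, and summing over the $n$ edges of $C$ gives $4m\le n^2$ directly.

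Step~3 is also not a proof: ``should force'', ``should propagate'' and ``expected conclusion'' describe the hoped-for outcome, not an argument, and the equality case is a substantive part of the theorem. Equality gives $d(v_i)+d(v_{i+1})=n$ for all $i$, hence $n$ is even and $d(v_{i+2})=d(v_i)$. Tightness of the injection above gives an exact complementarity on two arcs:
for $v_k$ on $v_{i+3}\cdots v_{i+2-\ell}$, $v_{i+1}v_k\in E(G)$ if and only if $v_iv_{k+\ell-3}\notin E(G)$;
for $v_k$ on $v_{i+2-\ell}\cdots v_{i-1}$, $v_{i+1}v_k\in E(G)$ if and only if $v_iv_{k+\ell-1}\notin E(G)$.

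From these relations you still have to prove that every edge joins the two parity classes of $C$. In particular you must rule out even $\ell$. There both shifts $\ell-3$ and $\ell-1$ are odd, so the relations are incompatible with a bipartition by parity and must instead be shown to be contradictory; your parity heuristic does not address this case. Only after that does $m=n^2/4$ give $G\cong K_{n/2,n/2}$. As written, the proposal establishes neither the bound $4m\le n^2$ nor the characterization of equality.
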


Since every cycle is a $2$-connected graph, it is natural to replace the
requirement of a cycle of each length by that of a $2$-connected subgraph
of each order. Under this weaker requirement, one may expect the
minimum-degree condition for pancyclicity to be relaxed. Motivated by this
observation, Yin and Wu~\cite{YinWu} investigated minimum-degree conditions
that guarantee the existence of $2$-connected subgraphs of all possible
orders. They proved the following result.

\begin{theorem}[Yin and Wu~\cite{YinWu}]
\label{thm:YinWu}
Let $G$ be a $2$-connected graph of order $n\ge 4$. If
\[
    \delta(G)\ge \left\lceil\frac{n}{3}\right\rceil+1,
\]
then $G$ contains a $2$-connected subgraph of order $\ell$ for every
$\ell\in\{4,5,\ldots,n\}$.
\end{theorem}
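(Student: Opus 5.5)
The plan is to build the required subgraphs from the bottom up, one vertex at a time. The basic tool is the observation that if $H$ is a $2$-connected subgraph of $G$ and $v\notin V(H)$ has at least two neighbors in $H$, then $H+v$ (with two such edges) is again $2$-connected. So it suffices to produce a $2$-connected subgraph of order $4$, and then show that at every stage with $|V(H)|=\ell<n$ we can find \emph{some} $2$-connected subgraph of order exactly $\ell+1$, possibly after replacing $H$ by another $2$-connected subgraph of order $\ell$.

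\emph{Step 1 (order $4$).} I find a $C_4$ by counting common neighbors. If no two vertices had two common neighbors, then $\sum_{v}\binom{d(v)}{2}\le \binom{n}{2}$. With $d(v)\ge n/3+1$ this fails once $n$ exceeds a small absolute constant. The finitely many small $n$, where $\delta\ge\lceil n/3\rceil+1$ is very restrictive, I check directly.

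\emph{Step 2 (growth step).} Let $H$ be $2$-connected of order $\ell$, $4\le \ell<n$, and set $R=V(G)\setminus V(H)$. If some $r\in R$ has two neighbors in $H$, we add $r$. Otherwise every $r\in R$ has at most one neighbor in $H$. Then $e(H,R)\le |R|$, and each $r\in R$ has at least $\delta-1\ge \lceil n/3\rceil$ neighbors inside $R$. In particular $|R|>n/3$, so $\ell<2n/3$.

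We analyze this ``stuck'' configuration by the size of $\ell$. If $\ell\le \delta/2$, each vertex of $H$ sends at least $\delta-\ell+1$ edges to $R$. Hence
\[
    e(H,R)\ge \ell(\delta-\ell+1)>|R|,
\]
a contradiction. If $\ell$ is larger, $H$ itself is dense inside. I then re-choose $H$: among all $2$-connected subgraphs of order $\ell$, take one maximizing $e(H,R)$, or one that contains a vertex with many neighbors in $R$. Since $G$ is $2$-connected, there are two disjoint $H$--$R$ edges, giving an $H$-path (ear) $P$ through $R$. Because $G[R]$ has minimum degree at least $n/3$, which is more than $|R|/2$ when $|R|<2n/3$, I expect to choose $P$ short, of length $2$ or $3$ internal vertices. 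I then trade: delete from $H$ a vertex $x$ with $H-x$ still $2$-connected and whose removal does not destroy the attachment points, and add the internal vertices of $P$. This adjusts the order by exactly $+1$.

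\emph{Main obstacle.} The hard step is this swapping argument. It requires that $H$ contain a vertex $x$ whose deletion preserves $2$-connectivity, for example a vertex with at least two neighbors on a large $2$-connected core. It also requires that the ear be short enough that swapping one or two vertices of $H$ for the ear's interior yields exactly order $\ell+1$. I would first try to use the density of both $H$ and $G[R]$: each has minimum degree roughly $n/3$ relative to its size, which forces many short cycles and vertices of degree at least $3$ in $H$. I would also keep the invariant that $H$ is chosen to have maximum minimum degree among $2$-connected subgraphs of order $\ell$. The threshold $\lceil n/3\rceil+1$ should enter exactly here, by ruling out the extremal structure of three nearly disjoint cliques of size about $n/3$ joined sparsely.
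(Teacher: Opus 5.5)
The paper does not prove this statement; it only quotes it from Yin and Wu. So your proposal has to stand on its own, and as written it is a plan rather than a proof.

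The routine parts are fine. The $C_4$ count in Step~1 needs no small cases, since $n\cdot\tfrac12(\tfrac n3+1)\tfrac n3>\binom n2$ reduces to $(\tfrac n3-1)^2>0$. The greedy step is correct. Your edge count in the stuck case gives a contradiction for every $4\le\ell\le\delta-2$, not just $\ell\le\delta/2$.

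The gap is the remaining stuck case, which carries the whole content of the theorem. The swap needs two things, and you establish neither.

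\textbf{(i) A short ear.} Each $r\in R$ has at most one neighbour in $H$, so every $H$-path through $R$ has at least two interior vertices. Your bound $\delta(G[R])>|R|/2$ requires $|R|<2n/3$, which your case split does not give for $\delta/2<\ell\le n/3$. If you push the count to $\ell\le\delta-2$, you do get $\delta(G[R])\ge|R|/2$ for all remaining $\ell$. But that only yields an ear with two \emph{or three} interior vertices.

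\textbf{(ii) Deletable vertices.} You then need one or two vertices of $H$, different from the ends of the ear, whose deletion leaves a $2$-connected graph once the ear is attached. You give no argument that such vertices exist; a $2$-connected graph need not have even one vertex whose removal keeps it $2$-connected (a cycle, for example). The extremal re-choices of $H$ you suggest (maximising $e(H,R)$, or $\delta(H)$) are not shown to produce them.

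Relatedly, the hypothesis $\delta\ge\lceil n/3\rceil+1$ is never actually used. The ``three sparsely joined cliques'' are no obstruction: three large cliques joined in a ring already contain $2$-connected subgraphs of all orders. Also, Liu and Ning lowered the bound to $\lceil n/4\rceil+2$, so for large $n$ there is no extremal configuration at $n/3$ for your argument to exclude.

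One way to make the swap rigorous is not to re-choose $H$ at all. Keep the nested chain $H_4\subset H_5\subset\cdots\subset H_\ell=H$ produced by the greedy growth. If an ear $P$ with $m\in\{2,3\}$ interior vertices has both ends in $V(H_j)$, then $V(H_i)\cup V(P)$ spans a $2$-connected subgraph for every $i\ge j$. This gives order $\ell+1$ whenever $j\le \ell+1-m$. You would still need a separate argument when every short ear ends at one of the last one or two added vertices, and for small $n$.

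For comparison, the paper's method for its own main theorem avoids the stuck case altogether. It first builds a small $2$-connected core $D$ such that every vertex outside $D$ has at least two neighbours in $D$, so greedy growth from $D$ never stalls. It then covers the small orders by $K_{2,m}$ inside a large common neighbourhood. That argument is probabilistic and valid only for $n\ge n_0$, so it would not recover the Yin--Wu statement for all $n\ge 4$.
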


Liu and Ning~\cite{LiuNing} subsequently improved the minimum-degree
condition in Theorem~\ref{thm:YinWu} as follows.

\begin{theorem}[Liu and Ning~\cite{LiuNing}]
\label{thm:LiuNing}
Let $G$ be a $2$-connected graph of order $n\ge 4$. If
\[
    \delta(G)\ge \left\lceil\frac{n}{4}\right\rceil+2,
\]
then $G$ contains a $2$-connected subgraph of order $\ell$ for every
$\ell\in\{4,5,\ldots,n\}$.
\end{theorem}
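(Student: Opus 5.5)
The plan is to build $2$-connected subgraphs of every order in two ranges: small orders from a complete bipartite graph $K_{2,t}$, and large orders by adding vertices one at a time to a small $2$-connected core. The basic fact used throughout is this: if $H$ is $2$-connected and $x\notin V(H)$ has at least two neighbours in $H$, then $H+x$ (with those edges) is again $2$-connected. So the whole problem reduces to finding a starting subgraph from which this one-vertex extension never gets stuck, and to covering the orders below it.

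\emph{Step 1 (small orders).} I would use a counting argument on common neighbourhoods. Fix a vertex $v$ and count paths $v\,u\,w$ with $u\in N(v)$. There are at least $d(v)(\delta-1)$ such paths, landing on at most $n-1$ vertices $w\neq v$. Since $\delta\ge \lceil n/4\rceil+2$, some $w\neq v$ has
\[
|N(v)\cap N(w)|\ \ge\ \frac{\delta(\delta-1)}{n-1}\ \ge\ t,
\]
where $t$ is roughly $n/16$. For every $s$ with $2\le s\le t$, the graph $K_{2,s}$ on $\{v,w\}$ and $s$ common neighbours is a $2$-connected subgraph of order $s+2$. This gives all orders from $4$ to $t+2$.

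\emph{Step 2 (a small absorbing core).} I want a $2$-connected subgraph $D$ with $|V(D)|\le t+2$ such that every vertex outside $D$ has at least $2$ neighbours in $D$. Once $D$ exists, adding the outside vertices in any order keeps the graph $2$-connected, since each new vertex already has two neighbours in $D$. This gives all orders from $|V(D)|$ to $n$.

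To build $D$, I would take a bounded family of vertices $x_1,\dots,x_r$ whose neighbourhoods cover every vertex of $G$ at least twice. The constraint $\delta>n/4$ should force such a family with $r$ bounded: greedily pick vertices whose neighbourhoods are nearly disjoint, and at most about four such pairwise nearly disjoint neighbourhoods of size $>n/4$ fit. I would then join the $x_i$ together, and to the $K_{2,s}$ from Step 1, by short paths, using $2$-connectivity of $G$ to make the union $2$-connected. A short argument is needed that these connecting paths can be chosen of bounded length, for instance by using that any two vertices have distance at most about $3n/(\delta+1)$, which is $O(1)$ here.

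\emph{Step 3 (joining the ranges).} The main obstacle is that Steps 1 and 2 must overlap, i.e.\ we need $|V(D)|\le t+2$. For large $n$ this is clear, since $|V(D)|=O(1)$ while $t=\Omega(n)$. For small $n$ the constants may fail, and there I would argue directly. I would start from a shortest cycle or a $K_{2,2}$ and extend greedily. If at some stage $H$ is \emph{closed}, meaning every outside vertex has at most one neighbour in $H$, then each vertex of $H$ has more than $n/4+1-|H|$ neighbours outside. One can then find an ear of length $2$ or $3$ whose interior vertices have many neighbours in $H$. Replacing a suitable vertex of $H$ by that ear increases the order by exactly one while preserving $2$-connectivity. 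The delicate case analysis, ruling out closed $H$ with no such ear, is exactly where the constant $+2$ in the degree bound should be needed, and I expect this part to take the most work.
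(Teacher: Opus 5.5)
The paper does not prove this statement; it is quoted from Liu and Ning. The closest argument in the paper is the proof of Theorem~\ref{thm:main} with $k=2$, $q=4$ (note $\lceil n/4\rceil+2\ge n/4$), and that proof is essentially your Steps~1--2. Common-neighbourhood averaging gives $K_{2,s}$ for the small orders, and a small $2$-connected core that $2$-dominates $G$ gives the large orders via Lemma~\ref{lem:addvertex}. That argument yields the statement only for $n\ge n_0(2,4)$, and this is the genuine gap in your proposal. The statement is claimed for every $n\ge 4$ with the exact bound $\lceil n/4\rceil+2$. The overlap condition $|V(D)|\le t+2$ is not a finite check: with the constants of Lemma~\ref{lem:core} one has $|V(D)|\le 77(128\log n+4)$ against $t\approx n/16$, so $n$ must be in the millions. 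Everything below that threshold is delegated to Step~3, which is not an argument and is inconsistent as written. If $H$ is closed, no outside vertex has two neighbours in $H$, so there is no ear of length $2$ at all, and each interior vertex of an ear of length $3$ has exactly one neighbour in $H$. An ear ``whose interior vertices have many neighbours in $H$'' therefore cannot exist. You also do not show that deleting the ``suitable vertex'' of $H$ leaves a $2$-connected graph, you do not carry out the case analysis, and you do not identify where the $+2$ is used. So in the range where the Liu--Ning theorem actually has content, nothing is proved.

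Separately, Step~2 fails as stated: a bounded family $x_1,\dots,x_r$ whose neighbourhoods cover every vertex twice need not exist. The random graph $G(n,1/2)$ is with high probability $2$-connected with $\delta\ge n/2-o(n)$. Yet a first-moment computation shows that every dominating set in it has size at least $(1-o(1))\log_2 n$. Your greedy argument only shows that a maximal family with pairwise disjoint neighbourhoods has at most three members and that every vertex is within distance $2$ of it. It does not show that every vertex has two neighbours in it. The correct version is Lemma~\ref{lem:smallset}: random sampling gives a set of $O(\log n)$ vertices in which every vertex has at least two neighbours, and this still suffices for the overlap when $n$ is large.

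Bounded distances alone also only make the core connected, not $2$-connected. For $2$-connectivity you need either two internally disjoint short paths between pairs (the paper uses Theorem~\ref{thm:shortmenger}) or an explicit repair. For example, for a cut vertex $c$ of the current core $D$, repeatedly add a shortest path in $G-c$ between two components of $D-c$. Such a path has bounded length because $G-c$ is connected with minimum degree at least $\delta-1$, and adding it strictly reduces the number of blocks. With these repairs your Steps~1--2 prove the statement only for sufficiently large $n$.
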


They further proposed the following conjecture.

\begin{conj}[Liu and Ning~\cite{LiuNing}]
\label{conj:LiuNing}
For every fixed integer $q\ge 3$, there exists an integer $n_0(q)$ such
that every $2$-connected graph $G$ of order $n\ge n_0(q)$ with
\[
    \delta(G)\ge \frac{n}{q}
\]
contains a $2$-connected subgraph of order $\ell$ for every
$\ell\in\{4,5,\ldots,n\}$.
\end{conj}

In fact, we prove the following stronger result. Recall that a graph is
\emph{$k$-connected} if it has more than $k$ vertices and remains
connected after the deletion of any set of fewer than $k$ vertices. The
case $k=2$ of the theorem below confirms
Conjecture~\ref{conj:LiuNing}.

\begin{theorem}
\label{thm:main}
For every fixed pair of integers $k\ge 2$ and $q\ge 3$, there exists an
integer $n_0(k,q)$ such that every $k$-connected graph $G$ of order
$n\ge n_0(k,q)$ with
\[
    \delta(G)\ge \frac{n}{q}
\]
contains a $k$-connected subgraph of every order
\[
    \ell\in\{2k,2k+1,\ldots,n\}.
\]
\end{theorem}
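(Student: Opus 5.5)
The plan follows the two-part strategy indicated in the abstract: we treat large orders and small orders separately.

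\emph{Large orders.} We first build a $k$-connected subgraph $D$ of bounded order $|V(D)|\le C(k,q)$ (independent of $n$) such that every vertex of $V(G)\setminus V(D)$ has at least $k$ neighbours in $D$. Given such a $D$, list the outside vertices as $v_1,\dots,v_{n-|D|}$ and set $D_i=G[V(D)\cup\{v_1,\dots,v_i\}]$. Adding a vertex with at least $k$ neighbours to a $k$-connected graph preserves $k$-connectivity (the expansion lemma). Hence every order from $|V(D)|$ to $n$ is realised.

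To construct $D$, take a random (or greedy) set $S$ of $t=t(k,q)$ vertices. A fixed vertex $v$ has $\deg(v)\ge n/q$, so the probability that $v$ has fewer than $k$ neighbours in $S$ is at most about $\binom{t}{k}(1-1/q)^{t-k}$. This is small but not $o(1/n)$, so a union bound alone fails. Instead I would argue greedily. Since $\delta\ge n/q$, any set $S$ in which each vertex has at most $k-1$ neighbours satisfies a counting contradiction once $|S|$ is large. Concretely, take a maximal family $x_1,\dots,x_m$ of vertices whose neighbourhoods are ``almost disjoint''. Then $m\le q(1+o(1))$, and every vertex has many neighbours among the union of neighbourhoods.

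A cleaner route: pick a set $T$ of $O_{k,q}(1)$ vertices such that every vertex of $G$ has at least $k$ neighbours in $T$. This is a $k$-fold dominating set. Its existence follows because a vertex chosen uniformly at random from $N(v)$ hits $v$'s neighbourhood with density $\ge 1/q$. Equivalently, we need a hitting set of size $O(1)$ for the hypergraph $\{N(v)\}$ with edges of size $\ge n/q$. Repeated greedy selection, each time picking a vertex covering at least a $1/q$-fraction of the ``deficient'' neighbourhoods, achieves $k$-fold covering after $O(kq\log)$ steps. This needs care; I would use instead that there are at most $q$ ``types'' by a regularity/clustering argument. Finally, make $T$ $k$-connected: since $G$ is $k$-connected, by Menger we add $k$ internally disjoint paths between pairs in $T$. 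To keep the size bounded, the paths must be short. Short paths exist because $\delta\ge n/q$ forces diameter $O(q)$. The resulting $D$ has bounded order, and its $k$-connectivity needs to be checked via fans. I expect this step, getting $D$ simultaneously $k$-connected, $k$-dominating and of bounded size, to be the main obstacle.

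\emph{Small orders.} For $2k\le \ell\le |V(D)|\le C$, it suffices to find a large complete bipartite subgraph $K_{k,M}$ with $M\ge C$, since $K_{k,m}$ is $k$-connected for $m\ge k$. Moreover, $K_{a,b}$ with $a,b\ge k$ has order $a+b$ covering all values from $2k$ upward. So the actual requirement is $K_{s,s}$ with $s\ge C/2$, or $K_{k,M}$ plus adding vertices. The averaging argument: $\sum_v \binom{\deg v}{s}\ge n\binom{n/q}{s}$. Hence some $s$-set has at least $n\binom{n/q}{s}/\binom{n}{s}\approx n q^{-s}$ common neighbours, which is linear in $n$ for fixed $s$. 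This gives $K_{s,\Omega(n)}$ for any fixed $s$. Taking $s=\max(k,C)$ and sub-selecting $a\in[k,s]$, $b\ge k$ with $a+b=\ell$ yields a $k$-connected $K_{a,b}$ of every order $\ell\in[2k,|V(D)|]$ once $n\ge n_0(k,q)$. Combining both ranges proves Theorem~\ref{thm:main}.
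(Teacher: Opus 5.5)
Your two-range architecture is the paper's: a $k$-dominating, $k$-connected core $D$ grown one vertex at a time, plus complete bipartite graphs obtained by averaging over common neighbourhoods. However, the construction of $D$ fails as proposed.

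First, the target $|V(D)|\le C(k,q)$ independent of $n$ is false. Every outside vertex has $k\ge 1$ neighbours in $D$, so $V(D)$ is a dominating set, and graphs satisfying the hypotheses can have unbounded domination number. Take $G(n,1/2)$, which w.h.p.\ is $k$-connected with $\delta\ge n/3$. A fixed $t$-set dominates all vertices with probability $(1-2^{-t})^{n-t}\le \exp(-(n-t)2^{-t})$. A union bound over at most $n^t$ sets then shows that w.h.p.\ no set of size at most $\frac12\log_2 n$ is dominating. So no ``regularity/clustering into $q$ types'' argument can give a bounded $D$; your observation that the union bound fails for bounded $t$ is a symptom of this.

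The correct target is $|V(D)|=O_{k,q}(\log n)$. Include each vertex independently with probability $p=16q\log n/n$; then $\mathbb{E}|N(v)\cap S_0|\ge 16\log n$. By Chernoff, each $v$ has fewer than $k$ neighbours in $S_0$ with probability at most $n^{-2}$, so the union bound now succeeds. Together with Markov this gives $S$ with $|S|\le 32q\log n+2k$ and $|N(v)\cap S|\ge k$ for \emph{every} $v\in V(G)$. A careful version of your greedy $O(kq\log n)$ remark also works.

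This change propagates to the small orders. With $s=\max(k,C)$ and $C$ of order $\log n$, your averaging bound $nq^{-s}=n^{1-c\log q}$ (for $s=c\log n$) drops below $1$ once $c\log q>1$. Take $s=k$ instead. Using $n\ge 2q(k-1)$, some $k$-set $A$ has at least $n/(2q)^k$ common neighbours. This gives $K_{k,b}$ for all $k\le b\le n/(2q)^k$, hence every order from $2k$ to $k+n/(2q)^k$, which is at least $|V(D)|$ for large $n$.

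Second, the short linkage is missing. A diameter bound $O(q)$ yields one short $x$--$y$ path, not $k$ internally disjoint ones. Menger's paths may be long, and then $|V(D)|$ need not be below $k+n/(2q)^k$, so the two ranges need not meet. Extracting short paths one at a time can also fail: $G$ is only $k$-connected, so deleting the interiors of earlier paths may separate $x$ from $y$.

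The paper fills this with the bounded-length Menger theorem of Faudree et al. An $m$-connected graph with $\delta\ge\lfloor (n-m+2)/\lfloor (d+4)/3\rfloor\rfloor+m-2$ has $m$ internally disjoint paths of length at most $d$ between any two vertices. With $m=k$ and $d=6q-4$ one has $\lfloor (d+4)/3\rfloor=2q$, and $\delta\ge n/q$ meets the bound for large $n$. Split $S=A\cup B$ with $|A|=k$, and join each $x\in A$, $y\in B$ by $k$ such paths. This gives $D=G[U]$ with $|V(D)|\le |S|(1+k^2(6q-5))=O(\log n)$.

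The ``fan'' check you defer is then short. For $|X|\le k-1$, pick $a_0\in A\setminus X$ and $b_0\in B\setminus X$. Among the $k$ paths from any $x\in A\setminus X$ to $b_0$ (or from $a_0$ to any $y\in B\setminus X$), one avoids $X$, so $S\setminus X$ lies in one component of $D-X$. Every other vertex of $D-X$ keeps a neighbour in $S\setminus X$, because it has at least $k$ neighbours in $S$. This is why the $k$-domination must hold for all vertices of $G$, including the path vertices inside $D$.
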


The lower endpoint in Theorem~\ref{thm:main} cannot be improved in
general. Indeed, let $m\ge k$ and consider $K_{m,m}$. This graph is
$k$-connected and
\[
    \delta(K_{m,m})=m=\frac{|V(K_{m,m})|}{2}
    \ge \frac{|V(K_{m,m})|}{q}.
\]
On the other hand, every $k$-connected graph has minimum degree at
least $k$. Consequently, a $k$-connected subgraph of $K_{m,m}$ has at
least $k$ vertices in each part and therefore has order at least $2k$.

We now introduce some notation. Let $G$ be a graph. For a vertex
$v\in V(G)$, let $N_G(v)$ and $d_G(v)$ denote the neighborhood and degree
of $v$ in $G$, respectively. When there is no ambiguity, we write $N(v)$
and $d(v)$ instead. The minimum degree of $G$ is denoted by $\delta(G)$.
For a set $X\subseteq V(G)$, let $G[X]$ denote the subgraph of $G$
induced by $X$, and write $G-X$ for $G[V(G)\setminus X]$. For distinct
vertices $x,y\in V(G)$, an $x$--$y$ path is a path in $G$ with endpoints
$x$ and $y$, and its length is its number of edges. For a set $X$ and a nonnegative integer
$r$, let $\binomset{X}{r}$ denote the family of all $r$-element subsets
of $X$. The complete bipartite graph with partite sets of orders $s$ and
$t$ is denoted by $K_{s,t}$. All logarithms are
natural.

In Section~2, we establish several auxiliary lemmas and prove
Theorem~\ref{thm:main}. Concluding remarks are given in Section~3.

\section{\normalsize Proof of Theorem~\ref{thm:main}}

We first record a standard vertex-addition observation.

\begin{lemma}
\label{lem:addvertex}
Let $H$ be a $k$-connected graph, and let $H'$ be obtained from $H$
by adding a new vertex $v$ adjacent to at least $k$ vertices of $H$.
Then $H'$ is $k$-connected.
\end{lemma}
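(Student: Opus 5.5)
We verify the definition of $k$-connectivity for $H'$ directly: we need $|V(H')|>k$, and that $H'-S$ is connected for every $S\subseteq V(H')$ with $|S|\le k-1$. The order condition is immediate, since $|V(H')|=|V(H)|+1>k+1$.

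Fix such a set $S$. We split into two cases according to whether the new vertex $v$ lies in $S$.

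\emph{Case $v\in S$.} Then $H'-S=H-(S\setminus\{v\})$, and $|S\setminus\{v\}|\le k-2<k$. This graph is connected because $H$ is $k$-connected.

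\emph{Case $v\notin S$.} Here $S\subseteq V(H)$ and $|S|\le k-1$, so $H-S$ is connected by the $k$-connectivity of $H$. The vertex $v$ has at least $k$ neighbors in $H$. At most $k-1$ of them lie in $S$, so at least one neighbor $u$ of $v$ survives in $H-S$. Consequently $H'-S$ is obtained from the connected graph $H-S$ by adding the vertex $v$ together with the edge $vu$ (and possibly further edges). Hence $H'-S$ is connected.

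In both cases $H'-S$ is connected, so $H'$ is $k$-connected. There is no genuine obstacle in this argument. The only point needing care is the counting step in the second case: the hypothesis of at least $k$ neighbors, rather than $k-1$, is exactly what guarantees that a neighbor of $v$ survives the deletion of $S$.
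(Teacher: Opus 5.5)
Your proposal is correct and follows essentially the same route as the paper's proof. You split into the same two cases according to whether $v$ lies in the deleted set. When $v\notin S$, you use the same observation that one of the at least $k$ neighbors of $v$ survives the deletion of at most $k-1$ vertices.
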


\begin{proof}
Let $X\subseteq V(H')$ with $|X|\le k-1$. If $v\in X$, then
\[
    H'-X=H-(X\setminus\{v\}),
\]
which is connected because $H$ is $k$-connected. If $v\notin X$, then
$H-X$ is connected. Moreover, at least one of the $k$ neighbors of
$v$ in $H$ lies outside $X$, so $v$ is joined to $H-X$ in $H'-X$.
Thus $H'-X$ is connected in either case. Since
$|V(H')|>|V(H)|>k$ and $X$ was arbitrary, the graph $H'$ is
$k$-connected.
\end{proof}

We shall use the following bounded-length version of Menger's theorem. For
positive integers $d$ and $m$, let $\cP_{d,m}(G)$ denote the property
that every pair of vertices of $G$ is joined by $m$ internally
vertex-disjoint paths, each of length at most $d$.

\begin{theorem}[Faudree et al.~\cite{FaudreeEtAl}]
\label{thm:shortmenger}
Let $G$ be an $m$-connected graph of order $n$. If
\[
    \delta(G)\ge
    \left\lfloor
      \frac{n-m+2}{\left\lfloor(d+4)/3\right\rfloor}
    \right\rfloor+m-2,
\]
then $G$ satisfies $\cP_{d,m}(G)$.
\end{theorem}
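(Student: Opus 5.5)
Since Theorem~\ref{thm:shortmenger} is quoted from Faudree et al., we only sketch how one could prove it. The approach is an extremal choice of Menger paths followed by a neighborhood-packing count. Fix distinct vertices $x,y$ and put $t=\lfloor (d+4)/3\rfloor$. Since $G$ is $m$-connected, Menger's theorem gives $m$ internally vertex-disjoint $x$--$y$ paths $P_1,\dots,P_m$. Among all such systems, choose one that minimizes the total length $\sum_i |E(P_i)|$, breaking ties lexicographically by the sorted vector of lengths. Suppose, for a contradiction, that some path, say $P=P_1$, has length at least $d+1$.

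Write $P=v_0v_1\cdots v_L$ with $v_0=x$, $v_L=y$ and $L\ge d+1$. Select the $t$ vertices $u_j=v_{3j+1}$ for $j=0,\dots,t-1$. Since $3(t-1)+1\le d\le L-1$, all of them are interior vertices of $P$, and any two are at distance at least $3$ along $P$.

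The first key step uses the minimality of the system.
\begin{itemize}
\item Every neighbor of $u_j$ on $P$ lies at $P$-distance at most $1$ from $u_j$; otherwise a chord would shorten $P$.
\item No vertex $z$ off all the paths is adjacent to two distinct selected vertices $u_i,u_j$. Indeed, the detour $u_i z u_j$ has length $2$, which is less than their $P$-distance of at least $3$, so it would shorten $P$.
\end{itemize}
Hence the sets $N(u_j)\setminus V(P_1\cup\cdots\cup P_m)$ are pairwise disjoint. Together with the local neighbors $v_{3j},v_{3j+2}$ on $P$, they occupy distinct vertices of $G$.

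The second step, which I expect to be the main obstacle, is to control neighbors of the $u_j$ on the other paths $P_2,\dots,P_m$. The target is to show that, after discounting, each $u_j$ uses at most $m-2$ vertices not charged disjointly, so that
\[
  t\,(\delta-(m-2))\le n-m+2 .
\]
This contradicts the hypothesis
\[
  \delta\ge \left\lfloor\frac{n-m+2}{t}\right\rfloor+m-2
\]
once the floor and the vertices $x,y$ are handled carefully. The $-m+2$ correction should come from $x$, $y$ and the uncharged vertices.

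The plan for the obstacle is an exchange argument. Suppose $u_i$ and $u_j$ with $i<j$ are both adjacent to a path $Q=P_s$, $s\neq 1$, at vertices $a$ and $b$. Then one can swap segments: replace $P$ and $Q$ by the two paths obtained by rerouting through the edges $u_ia$ and $u_jb$. Minimality of the total length, together with the lexicographic tie-break, should force $a$ and $b$ to appear on $Q$ in the same order as $u_i,u_j$ on $P$, with $Q$-distance at least the $P$-distance minus $2$. Consequently, each path $Q$ can receive only boundedly many edges from the selected vertices beyond one "free" neighbor per $u_j$. This yields at most $m-2$ excess neighbors per selected vertex: one per other path, minus the saving from the endpoints.

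If this charging does not close exactly, the fallback is to choose the selected vertices adaptively rather than at fixed spacing $3$. Alternatively, one can induct on $m$, deleting a shortest path's interior and applying the $(m-1)$ case to the remaining graph. In that graph both $n$ and $\delta$ drop, so the term $m-2$ in the bound appears naturally from the induction.
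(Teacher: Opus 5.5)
The paper gives no proof of this statement; it only quotes it from Faudree et al. Your sketch therefore has to stand on its own, and it does not close.

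\textbf{The final step fails, and cannot be repaired.} Your target inequality $t(\delta-(m-2))\le n-m+2$ does not contradict the hypothesis $\delta-(m-2)\ge\lfloor (n-m+2)/t\rfloor$: both hold when $\delta-(m-2)=\lfloor (n-m+2)/t\rfloor$. A contradiction would need $t(\delta-m+3)\le n-m+2$, i.e.\ one more private vertex for \emph{every} selected vertex. That is not a matter of ``handling the floor and $x,y$ carefully.'' In fact nothing can supply it, because the statement exactly as quoted is false. The cycle $C_5$ is $2$-connected with $\delta(C_5)=2=\lfloor 5/\lfloor 7/3\rfloor\rfloor$, so the hypothesis holds for $m=2$, $d=3$. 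Yet two adjacent vertices are joined only by paths of lengths $1$ and $4$. Blowing up a cycle of $d+2$ cliques in the same way, with two consecutive singleton cliques $\{x\},\{y\}$ and suitable sizes elsewhere, and joining $K_{m-2}$, gives counterexamples for every $m\ge2$ and every $d\equiv 0,2\pmod 3$. This includes $d=6q-4$, the value the paper uses.

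\textbf{The earlier steps also need repair.}

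\emph{Choice of selected vertices.} Your vertices $u_j=v_{3j+1}$, $j\le t-1$, require $3t-2\le L-1$. But $3t-2\in\{d+1,d+2\}$ when $d\equiv 0,2\pmod 3$ (e.g.\ $d=6q-4$), so for $L=d+1$ they are not interior vertices of $P$. The right choice is $s_j=v_{3j}$, starting at $x$, using $3(t-1)\le d+1\le L$.

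\emph{What the exchange gives.} Your exchange through $u_ia$ and $u_jb$ is valid, but it yields only a non-crossing property. If $a\ne b$ lie on $Q$ with $b$ before $a$, then $P[x,s_i]\,a\,Q[a,y]$ and $Q[x,b]\,s_j\,P[s_j,y]$ have total length at most $|P|+|Q|-2$. It gives no distance bound. Minimality alone even allows a single vertex of $Q$ to be adjacent to every $s_j$. Non-crossing gives $\sum_j|N(s_j)\cap \mathrm{int}(Q)|\le|\mathrm{int}(Q)|+t-1$, hence $n\ge t(\delta+1)-(m-1)(t-1)$. So the loss is one vertex per other path, i.e.\ $m-1$ per selected vertex, not $m-2$.

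\emph{What the approach does prove.} Carried out correctly, your argument proves $\mathcal{P}_{d,m}$ under $t(\delta-m+2)\ge n-m+2$. This is the quoted bound with the floor replaced by a ceiling. It is all the paper actually needs, since its hypothesis $\delta\ge n/q$ is about twice $n/(2q)$.

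\emph{The induction fallback.} This fails because deleting the interior of a path of length $\ell$ can lower both $\delta$ and the connectivity by $\ell-1$. The hypothesis is then not inherited by the smaller graph.
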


We shall also use the following two standard probability inequalities;
see Alon and Spencer~\cite{AlonSpencer}.

\paragraph*{Markov's inequality.}
If $X$ is a nonnegative random variable and $a>0$, then
\begin{equation}
\label{eq:markov}
    \Pr(X\ge a)\le \frac{\mathbb{E}X}{a}.
\end{equation}

\paragraph*{Chernoff's inequality.}
If $X$ is a sum of independent Bernoulli random variables with mean
$\mu$, then
\begin{equation}
\label{eq:chernoff}
    \Pr\bigl(X\le(1-\varepsilon)\mu\bigr)
    \le \exp\left(-\frac{\varepsilon^2\mu}{2}\right)
    \qquad (0<\varepsilon<1).
\end{equation}

We next construct a small set having many neighbors at every vertex.

\begin{lemma}
\label{lem:smallset}
For every fixed pair of integers $k\ge2$ and $q\ge3$, every sufficiently
large graph $G$ of order $n$ with $\delta(G)\ge n/q$ has a set
$S\subseteq V(G)$ such that
\[
    2k\le |S|\le 32q\log n+2k
\]
and
\[
    |N_G(v)\cap S|\ge k
    \qquad\text{for every }v\in V(G).
\]
\end{lemma}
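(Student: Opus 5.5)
Since the statement asks only for a set $S$ that every vertex sees at least $k$ times, we will obtain $S$ by random sampling. The sampling step is justified by the fact that every neighborhood has density at least $1/q$, and a small deterministic padding guarantees the lower bound $|S|\ge 2k$.

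Set $p=16q\log n/n$, which lies in $(0,1)$ for large $n$. Choose a random set $T\subseteq V(G)$ by including each vertex independently with probability $p$. For each vertex $v$, the quantity $X_v=|N_G(v)\cap T|$ is a sum of $d(v)$ independent Bernoulli variables. Its mean is
\[
\mu_v=p\,d(v)\ge pn/q=16\log n .
\]
Apply Chernoff's inequality \eqref{eq:chernoff} with $\varepsilon=1/2$. This gives $\Pr(X_v\le \mu_v/2)\le \exp(-\mu_v/8)\le n^{-2}$. For large $n$ we have $\mu_v/2\ge 8\log n\ge k$, so $\Pr(X_v<k)\le n^{-2}$. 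A union bound over all $n$ vertices shows that the event ``some $v$ has $X_v<k$'' has probability at most $1/n$.

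To control the size of $T$, note that $\mathbb{E}|T|=pn=16q\log n$. Markov's inequality \eqref{eq:markov} gives
\[
\Pr\bigl(|T|\ge 32q\log n\bigr)\le \tfrac12 .
\]
Since $1/n+1/2<1$, some outcome $T$ satisfies both $|T|<32q\log n$ and $|N_G(v)\cap T|\ge k$ for every $v\in V(G)$. (Chernoff's bound for the upper tail could be used instead of Markov, but only the lower-tail form is recorded in the paper.)

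Finally, if $|T|<2k$, add arbitrary vertices to $T$ until it has exactly $2k$ elements, and call the result $S$. Otherwise let $S=T$. Adding vertices can only increase each $|N_G(v)\cap S|$, so the neighborhood condition is preserved. In either case
\[
2k\le |S|\le \max\{|T|,2k\}\le 32q\log n+2k .
\]

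No step presents a real obstacle. The only care needed is the choice of the constant in $p$: it must be large enough that $\exp(-\mu_v/8)$ beats the union bound over $n$ vertices, while still keeping $2pn$ within $32q\log n$. The choice $p=16q\log n/n$ achieves exactly this balance.
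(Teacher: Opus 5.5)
Your proposal is correct and matches the paper's proof step for step: the same sampling probability $p=16q\log n/n$, Chernoff with $\varepsilon=1/2$ plus a union bound, Markov's inequality for the size of the random set, and deterministic padding to $2k$ vertices. The only detail the paper states explicitly that you leave implicit is that $n\ge 2k$ for large $n$, which is needed for the padding to be possible.
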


\begin{proof}
Choose each vertex independently with probability
\[
    p=\frac{16q\log n}{n},
\]
where $n$ is sufficiently large that $p\le1$. Let $S_0$ be the
resulting random set. For a fixed vertex $v$, put
\[
    X_v=|N_G(v)\cap S_0|.
\]
Then
\[
    \mu_v:=\mathbb{E}X_v=pd_G(v)\ge16\log n.
\]
Since $k$ is fixed, by taking $n$ sufficiently large we may assume that
$k\le\mu_v/2$. By \eqref{eq:chernoff},
\[
    \Pr(X_v<k)
    \le \Pr\left(X_v\le\frac{\mu_v}{2}\right)
    \le \exp\left(-\frac{\mu_v}{8}\right)
    \le n^{-2}.
\]
The union bound therefore gives
\[
\Pr\bigl(\exists v\in V(G):X_v<k\bigr)
=
\Pr\left(\bigcup_{v\in V(G)}\{X_v<k\}\right)
\le
\sum_{v\in V(G)}\Pr(X_v<k)
\le n\cdot n^{-2}
=\frac1n.
\]
Moreover,
\[
    \mathbb{E}|S_0|=16q\log n,
\]
so \eqref{eq:markov} gives
\[
    \Pr\bigl(|S_0|>32q\log n\bigr)\le\frac12.
\]
Combining these estimates, we obtain
\[
\begin{aligned}
&\Pr\bigl(X_v\ge k\text{ for every }v\in V(G)
          \text{ and }|S_0|\le32q\log n\bigr)\\
&\qquad\ge
1-\Pr\bigl(\exists v\in V(G):X_v<k\bigr)
-\Pr\bigl(|S_0|>32q\log n\bigr)\\
&\qquad\ge 1-\frac1n-\frac12>0.
\end{aligned}
\]
Hence there exists a choice of $S_0$ satisfying both properties. Since $n$ is sufficiently large, we may assume that $n\ge2k$. If $|S_0|\ge2k$, let $S=S_0$; otherwise, add arbitrary vertices to
$S_0$ until the resulting set $S$ has size $2k$. Then
\[
    2k\le |S|\le32q\log n+2k,
\]
and, since $S_0\subseteq S$, every vertex has at least $k$ neighbors
in $S$.
\end{proof}

We now combine Lemma~\ref{lem:smallset} with
Theorem~\ref{thm:shortmenger}.

\begin{lemma}
\label{lem:core}
For every fixed pair of integers $k\ge2$ and $q\ge3$, every sufficiently
large $k$-connected graph $G$ of order $n$ with
$\delta(G)\ge n/q$ contains an induced $k$-connected subgraph $D$
satisfying
\[
    |V(D)|\le (32q\log n+2k)\bigl(1+k^2(6q-5)\bigr)
\]
and
\[
    |N_G(v)\cap V(D)|\ge k
    \qquad\text{for every }v\in V(G)\setminus V(D).
\]
\end{lemma}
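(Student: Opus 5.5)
Start from the set $S$ of Lemma~\ref{lem:smallset}, so $2k\le|S|\le 32q\log n+2k$ and every vertex of $G$ has at least $k$ neighbors in $S$. Then join the vertices of $S$ by short internally disjoint paths and let $D$ be the subgraph induced by $S$ together with all interior vertices of these paths. The bound $|V(D)|\le|S|\bigl(1+k^2(6q-5)\bigr)$ suggests using, for each vertex $s\in S$, about $k$ paths of length at most $k(6q-5)+1$, i.e.\ at most $k\cdot k(6q-5)$ interior vertices per vertex of $S$.

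\emph{Step 1: short paths.} Apply Theorem~\ref{thm:shortmenger} with $m=k$ and choose $d$ so that $\lfloor (d+4)/3\rfloor$ exceeds $q$; for instance $d=3q$ gives $\lfloor(3q+4)/3\rfloor=q+1$. Since $G$ is $k$-connected and $\delta(G)\ge n/q\ge \lfloor (n-k+2)/(q+1)\rfloor+k-2$ for large $n$, the property $\cP_{d,k}(G)$ holds. Thus any two vertices are joined by $k$ internally disjoint paths, each with at most $d-1$ interior vertices. The constant $6q-5$ in the statement suggests the author takes $d=6q-4$ or similar. I would pick whichever value makes the arithmetic give exactly $k^2(6q-5)$.

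\emph{Step 2: building $D$.} Order $S=\{s_1,\dots,s_t\}$. For each $i\ge2$, take $k$ internally disjoint $s_1$--$s_i$ paths of length at most $d$ and let $W$ be the union of $S$ and all their vertices. The count is $|W|\le t+(t-1)k(d-1)$, which fits the bound with room to spare. Set $D=G[W]$. Every $v\notin W$ has at least $k$ neighbors in $S\subseteq W$, which gives the neighborhood condition.

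\emph{Step 3: $k$-connectivity of $D$. This is the main obstacle.} A union of path systems through a hub need not be $k$-connected, because paths for different $i$ may intersect and a small separator could cut off interior path vertices. I would instead add vertices one at a time using Lemma~\ref{lem:addvertex}. First build a $k$-connected kernel, then absorb the remaining vertices by showing each has $k$ neighbors in what is already built.

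A cleaner route exploits the dense structure: any two vertices of $S$ have $k$ disjoint short paths, and interior path vertices each have $k$ neighbors in $S$. To check $k$-connectivity of $D=G[W]$, take $X\subseteq W$ with $|X|\le k-1$.
\begin{itemize}
\item Any two vertices of $S\setminus X$ stay connected in $D-X$: take $k$ internally disjoint short paths between them in $G$. I must ensure these lie in $W$, so I include such paths for every pair of $S$, not just pairs through a hub. Counting then gives roughly $\binom{t}{2}kd$, which is too large. So I instead use hub paths plus the observation that $s_1$ itself may be in $X$; to handle this I use $k$ hubs $s_1,\dots,s_k$, which plausibly accounts for the factor $k^2$.
\item Every vertex $w\in W\setminus(S\cup X)$ has at least $k$ neighbors in $S$, hence at least one in $S\setminus X$, so it joins the component containing $S\setminus X$. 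This holds provided $S\setminus X\ne\emptyset$, which follows from $|S|\ge 2k$.
\end{itemize}
This reduces everything to the connectivity of $S\setminus X$. With $k$ hubs and $k$ disjoint paths from each $s_i$ to each hub, some hub survives, and for each $s_i\notin X$ some path to that hub avoids $X$. The resulting size bound is $t+tk\cdot k(d-1)$, which matches the form $(32q\log n+2k)(1+k^2(6q-5))$ with $d-1=6q-5$.
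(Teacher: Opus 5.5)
Your final construction is correct and is essentially the paper's proof. It takes $k$ hubs in $S$ and $k$ internally disjoint paths of length at most $d$ from every other vertex of $S$ to each hub, with $D$ induced on the union. $D-X$ is connected because some hub avoids $X$, and for each surviving vertex of $S$ one of its $k$ paths to that hub avoids $X$. All remaining vertices attach through their $k$ neighbors in $S$. The paper differs only slightly: it takes the hubs as $A$, uses only $A$--$B$ paths with $B=S\setminus A$, and links $A\setminus X$ through a surviving $b_0\in B$ rather than via hub--hub paths.

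Your choice $d=3q$ also works. It gives $\lfloor (d+4)/3\rfloor=q+1$, satisfies the hypothesis of Theorem~\ref{thm:shortmenger} for large $n$, and has $d-1=3q-1\le 6q-5$; the paper uses $d=6q-4$.
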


\begin{proof}
Let $S$ be the set supplied by Lemma~\ref{lem:smallset}, and write
$s=|S|$. Set
\[
    d=6q-4.
\]
Then
\[
    \left\lfloor\frac{d+4}{3}\right\rfloor=2q.
\]
Since $k$ and $q$ are fixed, for all sufficiently large $n$ we have
\[
    \frac{n}{q}
    \ge
    \left\lfloor\frac{n-k+2}{2q}\right\rfloor+k-2.
\]
Thus Theorem~\ref{thm:shortmenger} implies that every pair of vertices
of $G$ is joined by $k$ internally vertex-disjoint paths, each of length
at most $d$.

Partition $S$ as $S=A\cup B$, where $|A|=k$ and
$|B|=s-k\ge k$. For every pair $x\in A$ and $y\in B$, choose $k$
internally vertex-disjoint $x$--$y$ paths in $G$, each of length at
most $d$. Let $U$ be the union of $S$ and the vertex sets of all these
paths, and define
\[
    D:=G[U].
\]
There are $|A||B|=k(s-k)\le ks$ such pairs. Since each chosen path
contributes at most $d-1$ vertices outside $S$, we have
\begin{align*}
    |V(D)|
    &\le s+k(d-1)|A||B|\\
    &\le s+k^2(d-1)s\\
    &=s\bigl(1+k^2(d-1)\bigr)\\
    &\le(32q\log n+2k)\bigl(1+k^2(6q-5)\bigr).
\end{align*}

It remains to show that $D$ is $k$-connected. Let
$X\subseteq V(D)$ with $|X|\le k-1$. We first show that all vertices of $S\setminus X$ lie in the same
component of $D-X$, and then show that every remaining vertex of
$D-X$ also lies in this component.

Since $|A|=k$ and $|B|\ge k$, both $A\setminus X$ and
$B\setminus X$ are nonempty. Choose
\[
    a_0\in A\setminus X
    \qquad\text{and}\qquad
    b_0\in B\setminus X.
\]
For each $x\in A\setminus X$, consider the $k$ chosen
$x$--$b_0$ paths. Since their endpoints avoid $X$ and their internal vertices are
pairwise disjoint, at most $|X|\le k-1$ of the $k$ paths meet $X$.
Hence at least one of them is contained in $D-X$. Therefore, every
vertex of $A\setminus X$ is joined to $b_0$ by a path in $D-X$.
Similarly, every vertex of $B\setminus X$ is joined to $a_0$ by a path
in $D-X$. Since $a_0\in A\setminus X$, the graph $D-X$ also contains an
$a_0$--$b_0$ path. Hence all vertices of $S\setminus X$ lie in the
same component of $D-X$.

Now consider any vertex
$v\in V(D)\setminus(S\cup X)$. By the choice of $S$, the vertex $v$
has at least $k$ neighbors in $S$. Since $|X|\le k-1$, at least one of these neighbors belongs to
$S\setminus X$. The corresponding edge belongs to $D-X$ because $D$
is induced and neither endpoint lies in $X$. Hence $v$ lies in the
component containing $S\setminus X$. It follows that $D-X$ is
connected. Since
\[
    |V(D)|\ge |S|\ge2k>k,
\]
the graph $D$ is $k$-connected.

Finally, every vertex of $V(G)\setminus V(D)$ has at least $k$
neighbors in $S$, and $S\subseteq V(D)$. Thus
\[
    |N_G(v)\cap V(D)|\ge k
    \qquad\text{for every }v\in V(G)\setminus V(D),
\]
as required.
\end{proof}

Now we are ready to prove Theorem~\ref{thm:main}.

\begin{proof}[\textup{\textbf{Proof of Theorem~\ref{thm:main}}}]
Fix integers $k\ge2$ and $q\ge3$. Choose $n_0(k,q)$ sufficiently large
that all preceding lemmas apply and, for every $n\ge n_0(k,q)$,
\begin{equation}
\label{eq:overlap}
    k+\frac{n}{(2q)^k}
    \ge
    (32q\log n+2k)\bigl(1+k^2(6q-5)\bigr),
\end{equation}
and $n\ge2q(k-1)$. Such a choice is possible because $k$ and $q$ are
fixed and $n/\log n\to\infty$.

Let $G$ be a $k$-connected graph of order $n\ge n_0(k,q)$ with
$\delta(G)\ge n/q$. By Lemma~\ref{lem:core}, there is an induced
$k$-connected subgraph $D$ such that
\begin{equation}
\label{eq:D-bound}
    |V(D)|
    \le
    (32q\log n+2k)\bigl(1+k^2(6q-5)\bigr),
\end{equation}
and every vertex outside $D$ has at least $k$ neighbors in $D$.

\paragraph*{Large orders:}
List the vertices outside $D$ as
\[
V(G)\setminus V(D)=\{x_1,x_2,\ldots,x_t\}.
\]
Let $D_0=D$. For $1\le i\le t$, let
\[
    D_i=G\bigl[V(D)\cup\{x_1,\ldots,x_i\}\bigr].
\]
Since every $x_i$ has at least $k$ neighbors in
$D\subseteq D_{i-1}$, Lemma~\ref{lem:addvertex} implies inductively that
every $D_i$ is $k$-connected. Consequently, $G$ contains a
$k$-connected subgraph of every order in
\begin{equation}
\label{eq:large-orders}
    \{|V(D)|,|V(D)|+1,\ldots,n\}.
\end{equation}

\paragraph*{Small orders:}
For each $k$-element subset $Q$ of $V(G)$, define
\[
    c(Q)=\left|\bigcap_{x\in Q}N_G(x)\right|.
\]
Double-counting the pairs $(v,Q)$ with $v\in V(G)$ and
$Q\in\binomset{N_G(v)}{k}$ gives
\[
    \sum_{Q\in\binomset{V(G)}{k}}c(Q)
    =\sum_{v\in V(G)}\binom{d_G(v)}{k}
    \ge n\binom{\delta(G)}{k}.
\]
Hence there exists $A\in\binomset{V(G)}{k}$ such that
\begin{align*}
    c(A)
    &\ge \frac{n\binom{\delta(G)}{k}}{\binom{n}{k}}\\
    &=n\prod_{i=0}^{k-1}\frac{\delta(G)-i}{n-i}.
\end{align*}
For every $0\le i\le k-1$, the assumptions $\delta(G)\ge n/q$ and
$n\ge2q(k-1)$ give
\[
    \frac{\delta(G)-i}{n-i}
    \ge \frac{n/q-(k-1)}{n}
    \ge \frac{1}{2q}.
\]
Therefore,
\begin{equation}
\label{eq:common-neighbors}
    c(A)
    \ge n\prod_{i=0}^{k-1}\frac{\delta(G)-i}{n-i}
    \ge n\left(\frac{1}{2q}\right)^k
    =\frac{n}{(2q)^k}.
\end{equation}
Notice that $\bigcap_{a\in A}N_G(a)$ is disjoint from $A$.
Since $n$ is sufficiently large, \eqref{eq:common-neighbors} implies
that $c(A)\ge k$. For every integer $\ell$ with
\[
    2k\le\ell\le k+c(A),
\]
we may therefore choose a set
\[
    B_\ell\subseteq \bigcap_{a\in A}N_G(a)
    \qquad\text{with}\qquad
    |B_\ell|=\ell-k.
\]
The graph $G[A\cup B_\ell]$ contains $K_{k,\ell-k}$ as a spanning
subgraph. Since $\ell-k\ge k$, the graph $K_{k,\ell-k}$ is
$k$-connected, and adding edges preserves $k$-connectivity. Thus $G$
contains a $k$-connected subgraph of every order in
\begin{equation}
\label{eq:small-orders}
    \{2k,2k+1,\ldots,k+c(A)\}.
\end{equation}

By \eqref{eq:common-neighbors}, \eqref{eq:overlap}, and
\eqref{eq:D-bound},
\[
    k+c(A)
    \ge k+\frac{n}{(2q)^k}
    \ge |V(D)|.
\]
Consequently, the ranges in \eqref{eq:large-orders} and
\eqref{eq:small-orders} overlap, so $G$ contains a $k$-connected
subgraph of every order
\[
    \ell\in\{2k,2k+1,\ldots,n\}.
\]
This completes the proof.
\end{proof}

\section{\normalsize Concluding remarks}

Theorem~\ref{thm:main} confirms the conjecture of Liu and Ning in the
case $k=2$ and establishes its higher-connectivity analogue for every
fixed $k\ge3$. As noted in the Introduction, the family of complete
bipartite graphs $K_{m,m}$ shows that the lower endpoint $2k$ is best
possible.

More generally, the same argument applies under the condition
$\delta(G)\ge\alpha n$ for any fixed $\alpha>0$, with a corresponding
threshold $n_0=n_0(k,\alpha)$.

The proof is asymptotic, and no attempt has been made to optimize the
threshold $n_0(k,q)$. Determining sharper bounds on $n_0(k,q)$ would be an interesting
direction for future research.

\medskip
\noindent\textbf{Statement of AI Use.}
The key application of the probabilistic method was suggested by ChatGPT 5.5. ChatGPT was also used for language
refinement, grammatical revision, and improving the overall presentation
of the manuscript. The author verified all arguments and takes full
responsibility for the manuscript.

\end{document}